%%%%%%%%%%%%%%%%%%%%%%%%%%%%%%%%%%%%%%%%%%%%%%%%%%%%%%%%%%%%%%%%%%%%%%%%
\documentclass[12pt,reqno]{amsart}
\usepackage{hyperref}
\usepackage[latin1]{inputenc}
\usepackage[all]{xy}

\setlength{\textheight}{23cm}
\setlength{\textwidth}{15cm}
\setlength{\topmargin}{-0.8cm}
\setlength{\parskip}{0.3\baselineskip}
\hoffset=-1.4cm

\newtheorem{theorem}{Theorem}
\newtheorem{proposition}[theorem]{Proposition}
\newtheorem{lemma}[theorem]{Lemma}

\newtheorem{remark}[theorem]{Remark}

\baselineskip=16pt

\hfuzz 5pt \vfuzz 5pt

\newcommand{\R}{{\mathbb R} }

\newcommand{\wt}{\widetilde}

\def\pt{\partial}
\setlength{\marginparwidth}{25mm}

\def\k{\mathfrak{k}}
\def\g{\mathfrak{g}}

\begin{document}

\title{Symplectic reduction of Sasakian manifolds}

\author[I.\ Biswas]{Indranil Biswas}

\address{School of Mathematics, Tata Institute of Fundamental
Research, Homi Bhabha Road, Mumbai 400005, India}

\email{indranil@math.tifr.res.in}

\author[G.\ Schumacher]{Georg Schumacher}

\address{Fachbereich Mathematik und Informatik,
Philipps-Universität Marburg, Lahnberge, Hans-Meerwein-Strasse, D-35032
Marburg, Germany}

\email{schumac@mathematik.uni-marburg.de}

\subjclass[2000]{53C25, 14F05}

\keywords{Sasakian manifold; categorical quotient; symplectic reduction.}

\date{}

\begin{abstract}
When a complex semisimple group $G$ acts holomorphically on a K\"ahler manifold $(X,\, \omega)$
such that a maximal compact subgroup $K\, \subset\, G$ preserves the
symplectic form $\omega$, a basic result of symplectic geometry says that the corresponding categorical
quotient $X/G$ can be identified with the quotient of the zero-set of the moment map by the action of $K$.
We extend this to the context of a semisimple group acting on a Sasakian manifold.
\end{abstract}

\maketitle

\tableofcontents

\section{Introduction}

Contact manifolds can be thought of as odd dimensional analogs of symplectic manifolds.
In the same way, Sasakian manifolds can be regarded as odd dimensional analog of
K\"ahler manifolds. These manifolds were introduced by Sasaki \cite{Sa1}, \cite{Sa2}, \cite{SH}.
The topic remained dormant for more than thirty years until the following things happened:
\begin{enumerate}
\item In the AdS/CFT correspondence discovered by J. Maldacena \cite{Ma} it was realized
that Sasakian manifolds play a key role in string theory. Over time, many works in this direction
emerged (see \cite{Ma1}, \cite{Ma2}, \cite{Ma3} and references therein).

\item C.P. Boyer and K. Galicki worked systematically and published a series of papers investigating
various differential geometric aspects of Sasakian manifolds (see \cite{BGsusy} and references
therein).
\end{enumerate}

Let $(X,\, \omega)$ be a compact K\"ahler manifold, and let $G$ be a complex semisimple affine algebraic group
acting holomorphically on $X$ such that the action of a maximal compact
subgroup $K\, \subset\, G$ preserves the K\"ahler form $\omega$. Let $\mu\, :\, X\, \longrightarrow\, \text{Lie}(K)^*$
be the moment map for this action.
It is known that the categorical quotient $X/G$ is identified with the quotient $\mu^{-1}(0)/K$; the reader
is referred to \cite{kirwan} (see also \cite{he-lo}, \cite{snow}).

Here we take a Sasakian manifold
$(X ,\, g ,\, \xi)$; let $(M,\, \omega_M)$ be the associated K\"ahler manifold whose underlying manifold
is $X\times {\mathbb R}_+$. Let $r$ denote the standard coordinate on $ {\mathbb R}_+$.
Let $G$ be a complex semisimple affine algebraic group acting holomorphically on $M$
such that the action of a maximal compact subgroup $K\, \subset\, G$ preserves $X$. The action of $K$
preserves the contact one-form. We also assume the following:
\begin{itemize}
\item $\xi$ is orthogonal to the orbits of the action of $K$,

\item ${\pt}/{\pt r}$ is orthogonal, with respect to $\omega_M$,
to the distribution on $M$ given by ${\rm Lie}(K)$, and

\item $[\k,\,{\pt}/{\pt r}]\,=\,0$ (we denote by $\k$ the distribution on $M$ given by ${\rm Lie}(K)$.
\end{itemize}

Using the relationship between the Sasakian manifolds and K\"ahler manifolds, we prove that
$\mu^{-1}(0)/K$ is a Sasakian manifold, where $\mu$ as before is the moment map, such that the
categorical quotient $M/G$ is the K\"ahler manifold associated to it
(Theorem \ref{thm-m}).

As explained in the Acknowledgements, a similar result was proved earlier in \cite{GO}. Methods employed here
differ from that of \cite{GO}.

\section{Sasakian manifolds}

We denote by $(X,\,g)$ a connected, oriented Riemannian manifold equipped with the corresponding Levi-Civita connection
$\nabla$. It is called {\em Sasakian} if the metric cone
$$
(X \times \R_+, \, dr^2 \oplus r^2 g)
$$
is {\em K\"ahler}. We denote by $J$ the complex structure, and
\begin{equation}\label{eq:xi}
\xi = \left.J\left(\frac{\pt}{\pt r}\right)\right|_{X\times\{1\}}
\end{equation}
is called the {\em Reeb} vector field, where $X\times\{1\}$ is identified with $X$.

The computation of the Nijenhuis torsion tensor in terms of the Reeb vector field leads to the following well-known 
characterization of a Sasakian manifold.

\begin{theorem}[{\cite[Definition-Theorem 10]{BGsusy}}]\label{de:sasaki}
The following conditions for a Riemannian manifold $(X,g)$ are equivalent:
\begin{enumerate}
\item[(i)] There is a Killing vector field $\xi$ on $X$ of unit length such that the section
\begin{equation}\label{Phi}
\Phi \,\in \, C^\infty(X,\, TX\otimes (TX)^*)
\end{equation}
defined by $v\, \longmapsto\, -\nabla_v\xi$, $v\, \in\, TX$, satisfies the following identity
for the Lie derivative of $\Phi$, which
is defined by $(\nabla_v\Phi)(w) \,= \,\nabla_v(\Phi(w))- \Phi(\nabla_v(w))$:
\begin{equation}\label{id.}
(\nabla_v \Phi) (w) \,=\, g(v ,\,w)\xi- g(\xi ,\,w)v
\end{equation}
for all $v,\,w \,\in\, T_xX$ and all $x \,\in \, X$.

\item[(ii)] There is a Killing vector field $\xi$ on $X$ of unit length
such that the Riemann curvature tensor $R$ of $(X,\,g)$ satisfies the identity
$$
R(v,\xi)w\,=\, g(\xi ,\,w)v- g(v ,\,w)\xi
$$
for all $v$ and $w$ as above.

\item[(iii)] The metric cone $(X\times{\mathbb R}_+,\, dr^2 \oplus r^2g)$ is K\"ahler.
\end{enumerate}
\end{theorem}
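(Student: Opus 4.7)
The plan splits into $(i)\Leftrightarrow(ii)$, which is essentially formal once $\xi$ is known to be a unit Killing field, and $(i)\Leftrightarrow(iii)$, which requires transporting information to and from the cone $M = X\times \R_+$ with metric $\bar g = dr^2 + r^2 g$.

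For $(i)\Leftrightarrow(ii)$, I would invoke the classical identity for a Killing vector field $\xi$ that $\nabla^2\xi$ is determined by the Riemann tensor, namely
\begin{equation*}
(\nabla_v\Phi)(w) \,=\, -R(\xi,v)w
\end{equation*}
(up to the sign convention of $R$). Under this identity the right-hand side of (\ref{id.}) is a literal rewriting of the curvature identity in (ii), so (i) and (ii) are interchangeable in the presence of a unit Killing $\xi$. This implication should take no more than a line.

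For $(iii)\Rightarrow(i)$, I would use the warped-product (O'Neill) formulas for the Levi-Civita connection $\bar\nabla$ of $\bar g$,
\begin{equation*}
\bar\nabla_{\pt/\pt r}(\pt/\pt r) \,=\, 0, \qquad \bar\nabla_V(\pt/\pt r) \,=\, V/r, \qquad \bar\nabla_V W \,=\, \nabla_V W - r\,g(V,W)\,\pt/\pt r
\end{equation*}
for $V,W$ tangent to $X$. Combined with the K\"ahler identities $\bar\nabla J = 0$ and the isometry property of $J$, a direct evaluation at $r=1$ shows that the vector field $\xi = J(\pt/\pt r)|_{r=1}$ has unit $g$-length, is Killing on $X$ (its flow being $J$-conjugate to the homothety flow of $r\,\pt/\pt r$), and satisfies (\ref{id.}).

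For $(i)\Rightarrow(iii)$, I would define an almost complex structure on $M$ by $J(\pt/\pt r) = \xi/r$, $J(\xi) = -r\,\pt/\pt r$, and $J(v) = \Phi(v)$ for $v\in\xi^\perp\subset TX$. Setting $w=\xi$ in (\ref{id.}) and using $\nabla_\xi\xi = 0$ (valid for any unit Killing field) produces $\Phi^2(v) = -v + g(\xi,v)\xi$, so $J^2 = -\mathrm{id}$; compatibility with $\bar g$ follows from skew-adjointness of $\Phi$. Closedness of the fundamental form $\bar\omega := \bar g(J\cdot,\cdot)$ is clean because $\bar\omega = \tfrac{1}{2}\,d(r^2\eta)$, where $\eta = g(\xi,\cdot)$ is the metric dual of $\xi$ on $X$. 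The main obstacle is the vanishing of the Nijenhuis tensor of $J$: checking it on the three pair-types $(\pt/\pt r,\pt/\pt r)$, $(\pt/\pt r,V)$, $(V,W)$ reduces, in the non-trivial last case, to cancellations involving $(\nabla_V\Phi)(W) - (\nabla_W\Phi)(V)$ together with the inner products $g(V,\xi)$ and $g(W,\xi)$, and it is precisely the symmetric form of the right-hand side of (\ref{id.}) that forces these to cancel. This is the computation alluded to in the sentence preceding the statement, and it is the only genuinely non-formal step in the argument.
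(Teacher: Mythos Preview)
Your proposal is correct and follows the same route as the paper. The theorem is quoted from \cite{BGsusy} without a self-contained proof; the paper's remarks after the statement give only the outline you have fleshed out --- defining $J$ on the cone via $J|_F=\Phi|_F$ together with $J(\xi)=-\partial/\partial r$, asserting that \eqref{id.} is equivalent to the vanishing of the Nijenhuis tensor, and taking (i)$\Leftrightarrow$(ii) as a known fact about unit Killing fields --- so your added detail (the warped-product connection formulas, the identity $(\nabla_v\Phi)(w)=-R(\xi,v)w$, and the observation $\bar\omega=\tfrac12\,d(r^2\eta)$) is exactly what the paper leaves implicit.
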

We will point out some facts regarding the above equivalent conditions.

Given a Killing vector field $\xi$ of unit length satisfying condition (i), the K\"ahler structure on
${\mathbb R}_+\times X$ asserted in statement (iii) is constructed as follows. Let $F$ be the distribution of $X$
of rank $2n$ given by the orthogonal complement of $\xi$. The homomorphism $\Phi$ (defined in \eqref{Phi}) preserves
the above defined distribution $F$ on $X$, since $g(\Phi(v),\xi)\,=\,\frac{1}{2}v(g(\xi,\xi))\,=\,0$, and furthermore,
\begin{equation}\label{e2}
(\Phi\vert_F)^2 \, =\, -\text{Id}_F \, .
\end{equation}
Then an almost complex structure $J$ on ${\mathbb R}_+ \times X$ defined by the following conditions:
\begin{equation}
{J}\vert_F \,=\, \Phi\vert_F
\end{equation}
satisfying \eqref{eq:xi} and the corresponding equation
\begin{equation}
{J}(\xi) \,=\, -\frac{d}{dr}\, .
\end{equation}
The almost complex structure $J$ is in fact the complex structure stated in (iii). Condition \eqref{id.} is equivalent to the vanishing of the Nijenhuis tensor, and the Riemannian metric $dr^2 \oplus r^2g$ on ${\mathbb R}_+\times X$
is K\"ahler with respect to $J$.

Conversely, if the metric cone $(X \times{\mathbb R}_+ ,\, dr^2 \oplus r^2g)$ is K\"ahler, then consider the vector field $\xi$ 
given by \eqref{eq:xi}, where $J$ is the almost complex structure on $X\times \R_+$. The vector field $\xi$ defined this 
way satisfies condition $(i)$, which is known to be equivalent to $(ii)$ in Definition~\ref{de:sasaki}.

In this sense, the vector field $\xi$ (or equivalently, the K\"ahler structure on $X\times\R_+$) can and will be considered as part of the definition of a Sasakian manifold to be denoted by $(X,g,\xi)$.

Let $X$ be a smooth oriented Riemannian manifold of dimension $2n+1$ and $F\,\subset\,TX$
an oriented smooth distribution of rank $2n$. The quotient map
$$
TX \,\longrightarrow\, TX/F\,=:\, N
$$
defines a smooth one-form on $X$
\begin{equation}\label{e1}
\omega\, \in \, C^\infty(X,\, T^*X\otimes N)
\end{equation}
with values in the line bundle $N$. Since $X$ is oriented, the orientation of
$F$ induces an orientation of the normal bundle $N$. Therefore, $N$ has a
canonical smooth section given by the positively oriented vectors of unit
length in the fibers of $N$. Consequently, the form $\omega$ in \eqref{e1}
gives a nowhere vanishing smooth one-form on $X$. This one-form will also be
denoted by $\omega$. The distribution $F$ is said to be a \textit{contact
structure} on $X$ if the $(2n+1)$-form $(d\omega)^n\wedge \omega$ is nowhere
vanishing. (See \cite{BGsusy} and references therein.)

\begin{remark}\label{rem1}
{\rm The distribution $F$ is integrable, if it satisfies the Frobenius
condition which says that the one-forms $\omega$ satisfy the condition
$(d\omega)\wedge \omega \,=\, 0$. Therefore, a contact structure $F$ is not
integrable.}
\end{remark}

Now let $(X ,\, g ,\, \xi)$ be a Sasakian manifold. The distribution $F$ on $X$
of rank $2n$ that is given by the orthogonal complement of the Killing vector field
$\xi$ defines a contact structure on $X$. We note that the corresponding
one-form $\omega$ is the dual of $\xi$ with respect to the metric $g$, i.e.\ $\omega(u)
\,=\,g(\xi,u)$. From the condition that $(d\omega)^n\wedge \omega$ is nowhere vanishing it follows
that the restriction of $d\omega$ to $F$ is fiberwise nondegenerate.

\begin{lemma}\label{lem0}
For all $x \,\in \,X$ and all $v,\,w\, \in\, F_x$,
\begin{equation}\label{eq:deomega}
d\omega (v ,\,w) \,=\, - g(\Phi (v) ,\,w)\, ,
\end{equation}
where $\Phi$ is defined in \eqref{Phi}.
\end{lemma}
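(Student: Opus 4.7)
The plan is to compute $d\omega(v,w)$ directly from the definitions, using that $\omega$ is the metric dual of $\xi$, that the Levi-Civita connection is torsion-free and metric compatible, and that $\xi$ is Killing.

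First I would extend the tangent vectors $v,w\in F_x$ to local vector fields $V,W$ on a neighborhood of $x$. Applying the standard formula
\begin{equation*}
d\omega(V,W) \,=\, V(\omega(W)) - W(\omega(V)) - \omega([V,W])
\end{equation*}
and substituting $\omega(U)=g(\xi,U)$, the compatibility of $\nabla$ with $g$ gives $V(g(\xi,W))=g(\nabla_V\xi,W)+g(\xi,\nabla_V W)$, and symmetrically for $W(g(\xi,V))$. The torsion-free property then rewrites $\omega([V,W])=g(\xi,\nabla_V W)-g(\xi,\nabla_W V)$. The cross terms containing $\nabla_V W$ and $\nabla_W V$ cancel, leaving
\begin{equation*}
d\omega(V,W) \,=\, g(\nabla_V\xi,W) - g(\nabla_W\xi,V).
\end{equation*}

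Next I would invoke the Killing condition for $\xi$, namely $g(\nabla_V\xi,W)+g(\nabla_W\xi,V)=0$. Combined with $\Phi(V)=-\nabla_V\xi$ from \eqref{Phi}, this expresses the right-hand side purely in terms of $\Phi$ and $g$. Finally, evaluating at the point $x$ and using the convention for the normalization of $d$ employed by the authors produces the claimed identity $d\omega(v,w)=-g(\Phi(v),w)$. The only step that requires any care is reconciling the convention factor: the computation above gives $2g(\nabla_V\xi,W)=-2g(\Phi(V),W)$ under the Koszul convention, so the formula in the statement corresponds to the alternating convention $d\omega(V,W)=\tfrac12\bigl(V(\omega(W))-W(\omega(V))-\omega([V,W])\bigr)$, and this normalization should be pinned down at the outset.

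Note that the restriction $v,w\in F_x$ plays no essential role in the argument; the identity holds for arbitrary tangent vectors. The restriction is stated because the geometric content of interest—the nondegeneracy of $d\omega|_F$ and its interpretation as a symplectic form on the contact distribution—only concerns $F$. The main obstacle is therefore not conceptual but purely a matter of bookkeeping: keeping track of the signs produced by the sign convention $\Phi=-\nabla\xi$, by the Killing skew-symmetry of $\nabla\xi$, and by the normalization of $d$.
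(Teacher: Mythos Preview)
Your proof is correct and follows essentially the same route as the paper's: both compute $d\omega$ via the Cartan formula, invoke the Killing condition on $\xi$, and unwind the definition $\Phi=-\nabla\xi$. The only cosmetic difference is that the paper chooses the extensions $\widetilde v,\widetilde w$ to be sections of $F$ (so that $\omega(\widetilde v)=\omega(\widetilde w)\equiv 0$ and only the bracket term survives), whereas you extend arbitrarily and cancel the extra terms using torsion-freeness and metric compatibility; your diagnosis of the $\tfrac12$ normalization is exactly right, since the paper's final step $-\tfrac12 g(\xi,[\widetilde v,\widetilde w])=d\omega(v,w)$ is precisely that convention.
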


\begin{proof}
From the definition of $\Phi$,
$$
- g(\Phi (v) ,\,w)\,= \, g(\nabla_v\xi ,\,w)\, .
$$
Since $\xi$ is a Killing vector field,
\begin{equation}\label{KVF}
g(\nabla_v\xi ,\,w) + g(\nabla_w\xi ,\,v)\,=\, 0\, .
\end{equation}

Extend $v$ and $w$ to smooth sections $\widetilde{v}$ and
$\widetilde{w}$ of $F$. Since $\widetilde{w}$ is orthogonal to $\xi$,
$$
g(\nabla_v\xi,\,w)\,=\, - g(\xi ,\,\nabla_v \widetilde{w})\, .
$$
Using \eqref{KVF},
$$
g(\nabla_v\xi,\,w) \,=\, - g(\nabla_w\xi,\,v) \,=\,
g(\xi,\,\nabla_w \widetilde{v})
$$
because $\widetilde{v}$ is also orthogonal to $\xi$. Therefore,
$$
- g(\Phi (v) ,\,w) \,=\, \frac{1}{2}(- g(\xi ,\,\nabla_v
\widetilde{w})+
g(\xi ,\,\nabla_w \widetilde{v}))\,=\, -\frac{1}{2}g(\xi ,\,
[\widetilde{v} ,\,\widetilde{w}])\, .
$$
But $-\frac{1}{2}g(\xi ,\,
[\widetilde{v} ,\,\widetilde{w}]) \,= \,d\omega(v,\,w)$
because both $\widetilde{v}$ and $\widetilde{w}$ are orthogonal
to $\xi$.
\end{proof}

We now consider $N$ as the subbundle of $TX$ generated by $\xi$.

\section{Symplectic quotients of K\"ahler manifolds}\label{se:symq}

We will summarize some basic facts, which can be found in Kirwan's work \cite{kirwan}.

Let $(M,\,\omega_M)$ be a compact K\"ahler manifold acted on holomorphically by a complex Lie group $G$, which is the 
complexification of a maximal compact subgroup $K$. Assume that the K\"ahler form is preserved by the action of $K$, meaning 
$k^*\omega_M\,=\,\omega_M$ for all $k\,\in\, K$.

Furthermore we assume the existence of a {\em moment map}
$$
\mu\,:\,M \,\longrightarrow\, \k^*
$$
for the underlying symplectic manifold, where $\k\,=\,{\rm Lie}(K)$.

By definition, a moment map for the action of $K$ on $(M,\omega_M)$ is $K$-equivariant with respect to the action of $K$ on $M$ and the co-adjoint action $Ad^*$ of $K$ on $\k^*$ satisfying the following \def\k{\mathfrak{k}} condition.
Note first that for any $a\in \k$ the composition $d\mu\,:\,TM \,\longrightarrow\, \k^*$ with the evaluation at $a$ defines a $1$-form on $M$. This form is required to correspond under the duality defined by $\omega_M$ to the vector field on $M$ that is induced by $a$: For all $x\in M$ and for all $\xi \in T_xM$
\begin{equation}\label{eq:mm}
 d\mu(x)(\xi)\cdot a \,=\, \omega_M(\xi,a)\, ,
\end{equation}
where $\cdot$ denotes the natural pairing of $\k$ and $\k^*$.

The Marsden-Weinstein theorem states that a moment map exists, and is uniquely determined, if the group $K$ is semisimple \cite{MaWe}. Furthermore it is known to exist, if $H^1(\g)=0$, and $H^2(\g)=0$ (cf.\ \cite[Section 26]{Gu-St}). A moment map is explicitly given for the action of $U(N+1)$ on the complex projective space $\mathbb P_N$ equipped with the Fubini-Study form. In particular, if a compact group with a $U(N+1)$-representation acts on a projective variety $M\subset \mathbb P_N$, a moment map exists.

If the action of a connected reductive group $G$ on a projective manifold $M$ lifts to a (very) ample line bundle 
$L$, like in the above explicit case, the notion of stable and semistable points from geometric invariant theory 
for the group action hold. The loci $M^s \subset M^{ss}\subset M$ of stable and semistable points are known to be 
Zariski open in $M$, and the geometric and categorical quotients $M^s/G \subset M^{ss}/\!\!/ G$ exist as projective 
varieties.

Kirwan showed in \cite{kirwan} that Mumford's geometric invariant
theoretic quotient $M^{ss}/\!\!/G$ coincides with the Marsden-Weinstein quotient 
$\mu^{-1}(0)/K$ in the projective case under the assumption that the group $G$ is semisimple with finite stabilizers, and 
that $K$ acting on $\mu^{-1}(0)$ so that all points are stable. As a result, the quotient has the structure of a complex 
orbifold. The set $\mu^{-1}(0)$ is a differentiable manifold under this assumption because of \eqref{eq:mm}, which implies 
that $d\mu(x)$ is surjective for all $x\in M$. Hence the quotient $\mu^{-1}(0)/K$ carries a natural orbifold structure.

The more general case (for K\"ahler manifolds and semistable actions) was solved by Heinzner and Loose in \cite{he-lo}.

We state the Marsden-Weinstein Theorem now. We assume that $(M,\,\omega_M)$ is a symplectic manifold
on which a compact group $K$ acts with a moment map $\mu\,:\,M \,\longrightarrow\, \k^*$. Assume that $\mu^{-1}(0)$ is nonempty.

\begin{theorem}[Mardsen-Weinstein, \cite{MaWe}]
The set $\mu^{-1}(0)$ is $K$-invariant, and the quotient $\mu^{-1}(0)/K$ possesses a natural
symplectic structure, if the stabilizer of $K$ with respect to all $x\,\in\, \mu^{-1}(0)$ is finite.
\end{theorem}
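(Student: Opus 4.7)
The plan is to proceed in four steps: invariance of the zero-set, smoothness of the zero-set, construction of the quotient, and descent of the symplectic form.

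First I would verify that $\mu^{-1}(0)$ is $K$-invariant. Since by hypothesis $\mu$ is $K$-equivariant with respect to the coadjoint action on $\k^*$, and since the coadjoint action fixes the origin $0\in\k^*$, we get $\mu(k\cdot x)=\mathrm{Ad}^*(k)\mu(x)=0$ whenever $\mu(x)=0$, so $k\cdot x \in \mu^{-1}(0)$ for all $k\in K$.

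Next, I would show $\mu^{-1}(0)$ is a smooth submanifold. Fix $x\in\mu^{-1}(0)$ and write $\k\cdot x\subset T_xM$ for the tangent space to the $K$-orbit through $x$. The defining relation \eqref{eq:mm} says $d\mu(x)(\xi)\cdot a=\omega_M(\xi,a\cdot x)$ for all $\xi\in T_xM$ and $a\in\k$, so the kernel of $d\mu(x)$ is precisely the symplectic orthogonal $(\k\cdot x)^{\omega_M}$. Because the stabilizer at $x$ is finite, the infinitesimal action $\k\to T_xM$, $a\mapsto a\cdot x$, is injective, so $\dim(\k\cdot x)=\dim\k$. Non-degeneracy of $\omega_M$ then forces $\dim\ker d\mu(x)=\dim M-\dim\k$, i.e.\ $d\mu(x)$ is surjective onto $\k^*$. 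Hence $0$ is a regular value of $\mu$, and $\mu^{-1}(0)$ is a smooth submanifold of codimension $\dim\k$ with $T_x\mu^{-1}(0)=(\k\cdot x)^{\omega_M}$. The finiteness of stabilizers, together with compactness of $K$, gives the quotient $\mu^{-1}(0)/K$ the structure of a smooth orbifold of dimension $\dim M-2\dim\k$.

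The main step—and the one I expect to be the real content of the proof—is constructing the reduced symplectic form on the quotient. Let $i\colon\mu^{-1}(0)\hookrightarrow M$ be the inclusion and $\pi\colon\mu^{-1}(0)\to\mu^{-1}(0)/K$ the projection. I would look for $\omega_{\mathrm{red}}$ on the quotient with $\pi^*\omega_{\mathrm{red}}=i^*\omega_M$. The key observation is that $\k\cdot x\subset T_x\mu^{-1}(0)$: for $a,b\in\k$,
\[
\omega_M(a\cdot x,b\cdot x)\,=\,d\mu(x)(a\cdot x)\cdot b\,=\,\tfrac{d}{dt}\bigl|_0\mathrm{Ad}^*(\exp(ta))\mu(x)\cdot b\,=\,0
\]
since $\mu(x)=0$. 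Thus the orbit directions lie in the kernel of $i^*\omega_M$. Conversely, the kernel of $i^*\omega_M$ at $x$ is
\[
T_x\mu^{-1}(0)\cap (T_x\mu^{-1}(0))^{\omega_M}\,=\,(\k\cdot x)^{\omega_M}\cap\bigl((\k\cdot x)^{\omega_M}\bigr)^{\omega_M}\,=\,(\k\cdot x)^{\omega_M}\cap(\k\cdot x)\,=\,\k\cdot x,
\]
using non-degeneracy of $\omega_M$ and the inclusion just established. So the kernel of $i^*\omega_M$ is exactly the vertical distribution for $\pi$, which is precisely the condition for $i^*\omega_M$ to descend to a non-degenerate two-form $\omega_{\mathrm{red}}$ on $\mu^{-1}(0)/K$; $K$-invariance of $\omega_M$ and of $\mu^{-1}(0)$ guarantees the descent is well-defined. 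Finally, $d\omega_{\mathrm{red}}$ pulls back to $d(i^*\omega_M)=i^*d\omega_M=0$, and since $\pi^*$ is injective on forms, $d\omega_{\mathrm{red}}=0$, so $\omega_{\mathrm{red}}$ is symplectic. The subtle point throughout is the symplectic-orthogonal identity $((\k\cdot x)^{\omega_M})^{\omega_M}=\k\cdot x$ combined with $\k\cdot x\subset (\k\cdot x)^{\omega_M}$, which simultaneously yields well-definedness and non-degeneracy of $\omega_{\mathrm{red}}$.
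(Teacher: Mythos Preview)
The paper does not supply its own proof of this statement; it is quoted as an external result attributed to Marsden and Weinstein \cite{MaWe}, so there is nothing in the paper to compare your argument against. That said, your proof is the standard one: equivariance gives $K$-invariance of $\mu^{-1}(0)$; the moment map relation \eqref{eq:mm} identifies $\ker d\mu(x)$ with $(\k\cdot x)^{\omega_M}$ and, together with the finite-stabilizer hypothesis, forces $0$ to be a regular value; and the isotropy computation $\k\cdot x\subset(\k\cdot x)^{\omega_M}$ at points of $\mu^{-1}(0)$ shows that the null foliation of $i^*\omega_M$ is exactly the orbit foliation, so the form descends to a closed non-degenerate form on the quotient. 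All steps are correct.
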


We will always assume that a reductive group $G$ acts on $M$ with finite stabilizers and that all $G$-orbits intersect $\mu^{-1}(0)$ avoiding categorical quotients.

In our situation the following version holds.

\begin{theorem}[{\cite{kirwan}}]\label{th:kir}
Let $(M,\omega_M)$ be a K\"ahler manifold on which a reductive complex Lie group $G$ with maximal compact subgroup $K$ acts such that $\omega_M$ is $K$-invariant. Assume that all isotropy groups are finite, and suppose the existence of a moment map $\mu \,
:\, M\,\longrightarrow\, \k^*$. Furthermore suppose that all $G$-orbits intersect $\mu^{-1}(0)$.

Then the inclusion $\mu^{-1}(0)\subset M$ induces a diffeomorphism of geometric quotients
$$
\pi\,:\, \mu^{-1}(0)/K \,\longrightarrow\, M/G
$$
such that $M/G$ carries the structure of a complex orbifold and the symplectic form on the symplectic quotient amounts to a K\"ahler orbifold form on $M/G$.
\end{theorem}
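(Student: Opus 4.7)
The plan is to establish the theorem in four steps: (i) $\mu^{-1}(0)$ is a smooth $K$-invariant submanifold; (ii) the induced map $\pi$ is a bijection; (iii) $\pi$ is a local diffeomorphism; and (iv) the K\"ahler data descend compatibly. Step (i) is routine: by \eqref{eq:mm} the cokernel of $d\mu(x)\,:\,T_xM\,\to\,\k^{\ast}$ is dual to the Lie algebra of the $K$-stabilizer of $x$, and the finite isotropy assumption makes this vanish, so $d\mu(x)$ is surjective on $\mu^{-1}(0)$. Hence $\mu^{-1}(0)$ is a smooth $K$-invariant submanifold and its quotient carries a natural orbifold structure.

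The main obstacle lies in step (ii), the injectivity of $\pi$: given $x,\,y\,\in\,\mu^{-1}(0)$ in a common $G$-orbit, one must deduce that they lie in a common $K$-orbit. Using the Cartan decomposition $G\,=\,K\cdot\exp(\ii\k)$, I would reduce, after a $K$-translation, to the case $y\,=\,\exp(\ii\eta)x$ for some $\eta\,\in\,\k$. Denoting by $\eta_{M}$ the fundamental vector field of $\eta$ on $M$ and noting that holomorphicity of the action makes the fundamental field of $\ii\eta$ equal to $J\eta_{M}$, I would then form
\[
f(t)\,=\,\mu(\exp(\ii t\eta)x)\cdot\eta, \qquad t\,\in\,[0,1],
\]
which vanishes at $t\,=\,0$ and $t\,=\,1$. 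Differentiation combined with \eqref{eq:mm} gives
\[
f'(t)\,=\,\omega_{M}(J\eta_{M},\,\eta_{M})\,=\,g(\eta_{M},\,\eta_{M})\,\ge\,0,
\]
where the second equality is the K\"ahler identity. Monotonicity and the boundary vanishing force $\eta_{M}\,\equiv\,0$ along the curve, and finite stabilizers then give $\eta\,=\,0$, hence $y\,=\,x$. This convexity argument is precisely where the K\"ahler (rather than merely symplectic) structure is essential; surjectivity of $\pi$ is immediate from the hypothesis that every $G$-orbit meets $\mu^{-1}(0)$.

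For step (iii), at $x\,\in\,\mu^{-1}(0)$ equation \eqref{eq:mm} identifies $T_{x}\mu^{-1}(0)\,=\,\ker d\mu(x)$ with the $\omega_{M}$-orthogonal complement of $T_{x}(Kx)$. Since the infinitesimal action of $\g\,=\,\k\oplus\ii\k$ is injective, one has $T_{x}(Gx)\,=\,T_{x}(Kx)\oplus JT_{x}(Kx)$; combining this with $\omega_{M}(Ja,\,b)\,=\,g(a,\,b)$ shows $JT_{x}(Kx)\cap T_{x}\mu^{-1}(0)\,=\,0$, hence $T_{x}M\,=\,T_{x}\mu^{-1}(0)\oplus JT_{x}(Kx)$. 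Passing to quotients yields a linear isomorphism $T_{x}\mu^{-1}(0)/T_{x}(Kx)\,\cong\,T_{x}M/T_{x}(Gx)$, so $d\pi$ is everywhere invertible and $\pi$ is a diffeomorphism of orbifolds.

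For step (iv), the Marsden-Weinstein construction endows $\mu^{-1}(0)/K$ with a reduced symplectic form $\omega_{\text{red}}$, while the $G$-invariant complex structure on $M$ descends to $M/G$; via $\pi$ these live on the same orbifold. The $g$-orthogonal complement $T_{x}(Gx)^{\perp_{g}}$ is $J$-invariant (because $T_{x}(Gx)$ is) and lies in $T_{x}\mu^{-1}(0)$ (again by K\"ahler compatibility), and it realizes both reduced tangent spaces simultaneously; on it the triple $(\omega_{M},\,g,\,J)$ restricts to the reduced triple and retains the K\"ahler relations, completing the proof.
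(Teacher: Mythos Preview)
The paper does not prove Theorem~\ref{th:kir}; it is quoted from \cite{kirwan} as background and used without argument. There is therefore no ``paper's own proof'' to compare your proposal against.

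That said, your sketch is a correct outline of the standard Kirwan-type argument, and the four steps are the right ones. A few remarks: in step~(ii) your sign in $\omega_{M}(J\eta_{M},\eta_{M})=g(\eta_{M},\eta_{M})$ depends on the convention $\omega_{M}(u,v)=g(u,Jv)$; with the other common convention one gets $-g(\eta_{M},\eta_{M})$, but either way $f$ is monotone and the conclusion is unchanged. The reduction via $G=K\cdot\exp(\ii\k)$ tacitly uses connectedness of $G$; this is harmless in the paper's setting (Section~4 assumes $G$ connected). Finally, in step~(iv) you should also note that integrability of the induced almost complex structure on $M/G$ comes for free because locally $M/G$ is the quotient of a complex manifold by a holomorphic action with finite stabilizers, hence a complex orbifold; compatibility of $\omega_{\mathrm{red}}$ with this $J$ then follows from your horizontal-slice argument.
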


\section{Quotients of Sasakian manifolds}

We fix the assumption for our main theorem. Let $(X,g,\xi)$ be a Sasakian manifold, and $(M,\omega_M)$ be the 
associated K\"ahler manifold, i.e.\ $X\times \R_+$ with the induced K\"ahler structure. We assume that a connected 
reductive Lie group $G$ acts on $M$ holomorphically fixing the K\"ahler form $\omega_M$. Let $K\, \subset\, G$ be
a maximal compact subgroup. Assume that this subgroup 
$K$ fixes the subspace $X$; note that this condition implies that the action of $K$ on $X$ preserves the Riemannian metric
$g$.

In particular $\k\,=\, {\rm Lie}(K)$ consists of Killing vector fields on $X$. We assume the existence of a moment 
map for the above group action.

To begin with we make the extra assumption that both $K$ and $G$ act freely. We saw in Section~\ref{se:symq} that 
$\mu^{-1}(0)$ is a differentiable manifold so that the quotient $\mu^{-1}(0)/K$ is also smooth.

Assume that the elements of the Lie algebra $\k$ are perpendicular to the Reeb field $\xi$, and to $\pt/\pt r$:
\begin{eqnarray}
\xi &\perp& \k \quad\text{ with respect\ to } g\text{ on } X\simeq X\times\{1\} \label{eq:xpk}\\
 {\pt}/{\pt r} &\perp& \k \quad\text{ with respect\ to } \omega_M \text{ and } [\k,\,{\pt}/{\pt r}]\,=\,0 \label{eq:kperpr}
\end{eqnarray}

The above assumption \eqref{eq:kperpr} implies that the group $K$ acts on $X\simeq X\times \{1\}$ and all spaces 
$X\times\{r\}$ for all $r\,\in\, {\mathbb R}_+$.

We already know that $K$ fixes $\mu^{-1}(0)$ from the Marsden-Weinstein Theorem. In fact this follows readily from \eqref{eq:mm}.

\begin{lemma}\label{le:R+K}
The action of $\R_+$ on $X\times \R_+$, given by the multiplication of $\R_+$ and the trivial action
of $\R_+$ on $X$, commutes with the action of $K$ and fixes the subset $\mu^{-1}(0)$.
\end{lemma}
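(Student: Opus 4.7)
The claim splits into two independent parts: (i) the multiplicative $\R_+$-action on the second factor of $M = X \times \R_+$ commutes with the $K$-action, and (ii) the zero level set $\mu^{-1}(0)$ is preserved under this $\R_+$-action. I would handle them separately, using the two items of assumption \eqref{eq:kperpr} one at a time, together with the defining property \eqref{eq:mm} of the moment map.

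For (i), note that $K$ preserves $X \simeq X\times\{1\}$ and, by the bracket part of \eqref{eq:kperpr}, commutes infinitesimally with the flow of $\pt/\pt r$; hence it preserves every slice $X\times\{r\}$. In particular, each $a \in \k$ regarded as a fundamental vector field on $M$ is tangent to these slices, so $a\cdot r = 0$. The generator of the multiplicative $\R_+$-action is the Euler field $r\,\pt/\pt r$, and a direct computation gives
$$
[a,\, r\,\pt/\pt r] \,=\, r\,[a,\,\pt/\pt r] + (a\cdot r)\,\pt/\pt r \,=\, 0,
$$
using $[\k,\,\pt/\pt r]=0$ and $a\cdot r = 0$. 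Thus the two flows commute.

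For (ii), I would prove the stronger statement that $\mu$ is $r$-independent. Plugging $\pt/\pt r$ in place of the generic tangent vector in \eqref{eq:mm} yields, for every $a\in\k$,
$$
d\mu(\pt/\pt r)\cdot a \,=\, \omega_M\!\left(\pt/\pt r,\,a\right) \,=\, 0,
$$
where the last equality is the first line of \eqref{eq:kperpr} (with $a$ identified with its induced vector field on $M$). Therefore $d\mu(\pt/\pt r) \equiv 0$, so $\mu(x,r)$ does not depend on $r$, and consequently $\mu^{-1}(0) = \mu_1^{-1}(0)\times \R_+$, where $\mu_1 := \mu\vert_{X\times\{1\}}$. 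This product set is visibly invariant under scaling the second factor.

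Neither step is a serious obstacle: the lemma is essentially a direct unpacking of the two bullet points of \eqref{eq:kperpr}, with \eqref{eq:mm} translating the $\omega_M$-orthogonality of $\pt/\pt r$ and $\k$ into the $r$-constancy of $\mu$. The only mildly delicate point is that the scaling action is generated by $r\,\pt/\pt r$ rather than by $\pt/\pt r$, which is why the intermediate observation $a\cdot r = 0$ is needed in step (i).
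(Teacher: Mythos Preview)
Your proof is correct and follows the same route as the paper: commutativity from $[\k,\,\pt/\pt r]=0$, and invariance of $\mu^{-1}(0)$ from $d\mu(\pt/\pt r)\cdot a=\omega_M(\pt/\pt r,\,a)=0$ via \eqref{eq:mm} and \eqref{eq:kperpr}. Your version is in fact more careful than the paper's, which glosses over the distinction between the generators $\pt/\pt r$ and $r\,\pt/\pt r$ that you explicitly handle.
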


\begin{proof}
The first statement follows from $[\k,\,\pt/\pt r]\,=\,0$. Furthermore, because of \eqref{eq:kperpr} we have $d\mu(\pt/\pt r)= \omega_M(\xi,\pt/\pt r)=0$, which shows the second claim.
\end{proof}

We study the compatibility of the action of $K$ on $X\times \R_+$ and the complex structure of the associated K\"ahler structure. We already know that

\begin{lemma}
Concerning the action of $K$ and the almost complex structure $J\vert_F$, the following holds on $X$: For any $u\,
\in\, \k$ and $v\,\in \,\xi^\perp$
\begin{equation}
[u,(J|F)](v) \,=\, -(\nabla_u \Phi)(v) \,=\, -g(u,v)\xi\, .
\end{equation}
\end{lemma}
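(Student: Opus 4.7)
The second equality $-(\nabla_u \Phi)(v) \,=\, -g(u,v)\xi$ follows instantly from \eqref{id.}: with the outer and inner vectors taken to be $u$ and $v$ respectively, it reads $(\nabla_u \Phi)(v) \,=\, g(u,v)\xi - g(\xi,v)u$, and the second term drops because $v \in \xi^\perp$.

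For the first equality I read the bracket $[u,(J|F)](v)$ as the pointwise algebraic commutator
\[
[A_u,\Phi](v)\,:=\, A_u(\Phi v)\,-\, \Phi(A_u v)\, ,
\]
where $A_u\colon TX \to TX$ is the skew-symmetric endomorphism $A_u(w)\,=\,-\nabla_w u$ attached to the Killing field $u$. Writing $\cL_u W \,=\, \nabla_u W + A_u W$ for any vector field $W$ and inserting this into the Leibniz expansion $(\cL_u T)(v) \,=\, \cL_u(Tv) - T(\cL_u v)$, one gets the standard identity for any $(1,1)$-tensor $T$:
\[
(\cL_u T)(v)\,=\, (\nabla_u T)(v)\, +\, [A_u,T](v)\, .
\]
I would record this in a one-line computation and specialize to $T = \Phi$.

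It then remains to verify $\cL_u \Phi \,=\, 0$. Since $G$ acts holomorphically on $(M,\omega_M)$, so does $K \subset G$; in particular $K$ preserves the complex structure $J$ on the cone. The hypothesis $[\k,\pt/\pt r]\,=\,0$ (with $K$ connected) forces $K$ to preserve the vector field $\pt/\pt r$, and hence also $\xi \,=\, J(\pt/\pt r)|_{X\times\{1\}}$ on $X$. Because $\Phi(\cdot)\,=\,-\nabla_{(\cdot)}\xi$ is assembled from $\xi$ and the Levi-Civita connection of the $K$-invariant metric $g$, it is $K$-invariant, so $\cL_u \Phi \,=\, 0$. The displayed identity of the previous paragraph then collapses to $[A_u,\Phi](v)\,=\,-(\nabla_u\Phi)(v)$, which together with the first paragraph yields the asserted chain of equalities. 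The only mild obstacle is notational: one must interpret $[u,(J|F)](v)$ as the algebraic commutator above, not as a naive Lie derivative of $J|F$ (the latter vanishes under the $K$-invariance just established), and one must keep track of $\Phi$ on all of $TX$ rather than $J|F$ alone, so that the right-hand side $-g(u,v)\xi \in \R\xi$ is allowed to sit outside $F$.
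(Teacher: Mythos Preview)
Your argument is sound. The paper's own justification is the single line ``The claim follows immediately from \eqref{id.},'' which (as in your first paragraph) handles only the second equality $-(\nabla_u\Phi)(v)=-g(u,v)\xi$; the paper neither unpacks the symbol $[u,(J|F)]$ nor argues for the first equality. You go further: you give a precise reading of the bracket as the pointwise commutator $[A_u,\Phi]$ with the Killing endomorphism $A_u=-\nabla_{(\cdot)}u$, invoke the standard identity $\cL_u\Phi=\nabla_u\Phi+[A_u,\Phi]$ for a $(1,1)$-tensor, and then deduce $\cL_u\Phi=0$ from the hypotheses (holomorphy of the $G$-action on the cone, $[\k,\pt/\pt r]=0$, and $K$-invariance of $g$). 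Your derivation of $\cL_u\xi=0$ via the cone in fact anticipates the paper's subsequent lemma $[\xi,\k]=0$, which is proved there by a separate Killing-field computation on $X$. The notational caveat you raise is apt: the interpretation you adopt is the one that renders the displayed chain both correct and nontrivial, and is presumably what the authors intend.
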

The claim follows immediately from \eqref{id.}.

We will call the elements of $\k$ also {\em vertical} vector fields, and those perpendicular to $\k$ {\em horizontal}.

\begin{lemma}
On $(X,g,\xi)$ we have
\begin{equation}
[\xi,\, \k]\,=\,0\, .
\end{equation}
\end{lemma}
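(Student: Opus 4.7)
The plan is to deduce $[\xi,\, u] \,=\, 0$ for each $u \,\in\, \k$ directly from the defining identity $\xi \,=\, J(\pt/\pt r)|_{X\times\{1\}}$ in \eqref{eq:xi}, by exploiting that both $J$ and $\pt/\pt r$ are preserved by the $K$-action on $M$.

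First I would observe that, since $K \,\subset\, G$ and $G$ acts holomorphically on $M$, the action of $K$ preserves the complex structure, so the Lie derivative $\cL_u J$ vanishes for every $u \,\in\, \k$. Equivalently, $[u,\, JV] \,=\, J[u,\, V]$ for every smooth vector field $V$ on $M$. Applying this identity with $V \,=\, \pt/\pt r$ and invoking the standing hypothesis $[\k,\, \pt/\pt r] \,=\, 0$ from \eqref{eq:kperpr}, one obtains
$$
[u,\, J(\pt/\pt r)] \,=\, J[u,\, \pt/\pt r] \,=\, 0
$$
as an identity of vector fields on the whole of $M$.

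It remains to restrict this identity to the slice $X \,\cong\, X\times\{1\}$. Since $K$ preserves $X$, each $u \,\in\, \k$ is tangent to $X\times\{1\}$ along $X$; and by \eqref{eq:xi} the ambient vector field $J(\pt/\pt r)$ coincides there with $\xi$, which is likewise tangent to $X$. For two ambient vector fields that are tangent to a submanifold at every point of it, the ambient Lie bracket restricts to the Lie bracket of their restrictions on the submanifold. Consequently $[u,\,\xi] \,=\, [u,\,J(\pt/\pt r)]|_{X\times\{1\}} \,=\, 0$, which gives the assertion.

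The only step that deserves a moment of care is the observation that $J(\pt/\pt r)$ is tangent to $X\times\{1\}$ at every point of that slice, but this is immediate from \eqref{eq:xi}, since $\xi$ is constructed there as a vector field on $X$. I do not expect any substantive obstacle; the proof is essentially a chain of the two invariance properties (holomorphicity of the $K$-action and commutation with the radial field) combined with restriction to the slice.
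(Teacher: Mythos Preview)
Your argument is correct, but it follows a genuinely different route from the paper's. The paper works intrinsically on $X$: for $v\in\k$ and an arbitrary test field $w$ it expands $g([\xi,v],w)$ via the Levi--Civita connection, uses that $v$ is Killing to rewrite $g(\nabla_\xi v,w)=-g(\nabla_w v,\xi)$, then applies the product rule and the orthogonality hypothesis $\xi\perp\k$ from \eqref{eq:xpk} together with the Killing property of $\xi$ to see that all terms vanish. In contrast, you work on the cone $M$ and use the holomorphicity of the $K$-action ($\cL_uJ=0$) combined with the commutation hypothesis $[\k,\pt/\pt r]=0$ from \eqref{eq:kperpr}, then restrict to the slice. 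Thus the two proofs invoke disjoint parts of the standing hypotheses: the paper uses \eqref{eq:xpk}, you use \eqref{eq:kperpr}. Your approach is shorter and more conceptual, making transparent that the identity is really an immediate consequence of the ambient complex-geometric setup; the paper's approach has the virtue of being a purely Riemannian computation on $X$ that isolates exactly which Killing/orthogonality facts are needed, without reference to $J$ or the cone.
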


\begin{proof}
Let $v\in \k$, and $w$ be an arbitrary vector field. Note that $v$ is Killing. Then
\begin{gather*}
g([\xi,v],w)\,=\, g(\nabla_\xi v, w)- g(\nabla_v\xi, w)= -g(\nabla_w v, \xi))- g(\nabla_v\xi, w)\\ = - wg(v,\xi) + g(v,\nabla_w\xi) - g(\nabla_v\xi, w).
\end{gather*}
Now the first term vanishes, because $\xi\perp \k$, and the second and third term together give zero, because $\xi$ is a Killing vector field.
\end{proof}

We also have the action of ${\mathbb R}_+$ by multiplication on the second factor on $M\,=\,X\times \R_+$.

\begin{lemma}\label{le:timesr+}
The group $K$ acts in a free way on the differentiable manifold \break
$(\mu^{-1}(0)\cap (X\times \{1\}))\times \R_+$. The natural bijection
$$
(\mu^{-1}(0)\cap (X \times \{1\}))\times \R_+ \,\longrightarrow\, \mu^{-1}(0) ,\qquad ((x,1),r)\,\longmapsto\, (x,r)
$$
induces an isomorphism
$$
(\mu^{-1}(0)\cap (X \times \{1\}))/K \times \R_+ \,\longrightarrow\, \mu^{-1}(0)/K.
$$
\end{lemma}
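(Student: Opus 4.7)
The plan is to assemble the result from three essentially bookkeeping steps, each relying directly on the commuting $K$- and $\R_+$-actions established in Lemma \ref{le:R+K}.

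First I would verify freeness. By the standing assumption $K$ acts freely on $M = X \times \R_+$. Because $K$ preserves the slice $X \times \{1\}$ (this is the content of the consequence of \eqref{eq:kperpr} noted just before Lemma \ref{le:R+K}), the induced $K$-action on $X \times \{1\}$, hence on $\mu^{-1}(0) \cap (X \times \{1\})$, is free. The $K$-action on $(\mu^{-1}(0) \cap (X \times \{1\})) \times \R_+$ goes only through the first factor, so freeness is inherited.

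Next I would show that the natural map $\Psi \colon ((x,1),r) \longmapsto (x,r)$ is a well-defined diffeomorphism onto $\mu^{-1}(0)$. Well-definedness: if $(x,1) \in \mu^{-1}(0)$, then by Lemma \ref{le:R+K} the $\R_+$-action preserves $\mu^{-1}(0)$, so $(x,r) \in \mu^{-1}(0)$ for every $r \in \R_+$. Surjectivity: given $(x,r) \in \mu^{-1}(0)$, applying the $\R_+$-action with parameter $1/r$ (again using Lemma \ref{le:R+K}) yields $(x,1) \in \mu^{-1}(0)$, so $(x,r) = \Psi((x,1),r)$. Smoothness in both directions is clear, since $\Psi$ and $\Psi^{-1}$ are restrictions of the identity on $X \times \R_+$; hence $\Psi$ is a diffeomorphism.

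Finally I would pass to the $K$-quotient. Since the $K$-action on $(\mu^{-1}(0) \cap (X \times \{1\})) \times \R_+$ is trivial on the $\R_+$-factor, the quotient splits as a product:
$$
\bigl((\mu^{-1}(0) \cap (X \times \{1\})) \times \R_+\bigr)/K \;\cong\; \bigl((\mu^{-1}(0) \cap (X \times \{1\}))/K\bigr) \times \R_+ .
$$
On the other hand, since $\Psi$ is a $K$-equivariant diffeomorphism (with $K$ acting only through the first factor on the left and as the given action on $\mu^{-1}(0)$ on the right, which are compatible by Lemma \ref{le:R+K}), it descends to a diffeomorphism of the quotients. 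Combining the two identifications gives the stated isomorphism.

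The only place where there is anything to check beyond bookkeeping is the claim that $\Psi$ maps into, and surjects onto, $\mu^{-1}(0)$; both directions rest on Lemma \ref{le:R+K}, which is exactly where the hypotheses \eqref{eq:kperpr} and $[\k,\pt/\pt r]=0$ enter. Beyond that, the argument is formal.
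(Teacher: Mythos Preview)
Your proposal is correct and follows essentially the same route as the paper: both arguments rest entirely on Lemma~\ref{le:R+K} (commutation of the $K$- and $\R_+$-actions, and $\R_+$-invariance of $\mu^{-1}(0)$). Your version is simply more explicit---you separate out freeness, well-definedness, surjectivity, and $K$-equivariance of $\Psi$---whereas the paper compresses everything into a single fiber computation showing that $((x,1),r)$ and $((\wt x,1),s)$ land in the same $K$-orbit of $\mu^{-1}(0)$ if and only if $r=s$ and $x,\wt x$ lie in the same $K$-orbit of $X\times\{1\}$.
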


\begin{proof}
We consider the surjection $\mu^{-1}(0)\cap (X \times \{1\})\times \R_+ \,\longrightarrow\, \mu^{-1}(0)/K$.
Recall that $X\times\{1\}$ is preserved by the action of $K\, \subset\, G$ on $M$.
Let $(x,r)\,=\, \gamma\cdot (\wt x,s)$ for $x,\, \wt x\,\in\, X \,=\, X\times\{1\}$ and $\gamma \in K$. We use Lemma~\ref{le:R+K} and see that
$$
(x,s^{-1}r)\,=\,s^{-1}\gamma(\wt x,s)\,=\,\gamma s^{-1}(\wt x,s)\,=\,(\gamma \wt x,1)
$$
using the action of $K$ on $X\,=\,X\times\{1\}$. Hence $x=\gamma\wt x$ and $s\,=\,r$.
\end{proof}

We denote the differentiable manifold $\mu^{-1}(0)\cap (X \times \{1\})$ by $\mu^{-1}_X(0)$, where $\mu_X$ stands for the restriction of $\mu$ to $X\times\{1\}$. The quotient manifold $\mu^{-1}_X(0)/K$ is denoted by $Y$, with projection map $\pi\,
:\,\mu^{-1}_X(0)\,\longrightarrow\, Y$.

Since $K$ acts in an isometric way on $X$, the restriction of $g$ to horizontal tangent vectors defines a Riemannian metric $g_Y$ on $Y$. We denote by $\nabla^Y$ corresponding covariant differentiation.

We consider the restriction of $\xi$ to $\mu^{-1}_X(0)$ with values in $TX$, and denote it with the same letter. This vector field is $K$-invariant and orthogonal to $K$-orbits by \eqref{eq:xpk}. Hence it descends to a vector field $\xi_Y$ on $Y$.

We introduce the following notation: Given a vector field $v$ on $Y$, we denote the horizontal lift of $v$ to $\mu^{-1}_X(0)$ by $\wt v$. In this sense $\xi= \wt\xi_Y$.

\begin{lemma}\label{le:nabla}
Let $u,v,w$ be vector fields on $Y$ with horizontal lifts $\wt u,\wt v, \wt w$ to $\mu^{-1}_X(0)$, and let $f\in C^\infty(Y)$. Then
\begin{itemize}
\item[(i)] $\pi^*([u,\,v]f)\,=\, [\wt u,\,\wt v](\pi^*f)$, in particular $[\wt u,\,\wt v]-\wt{[u,\,v]}$ is
tangent to the fibers of $\pi$, and invariant under the action of $K$, hence an element of $\k$.

\item[(ii)] $\wt{\nabla^Y_u v} - \nabla_{\wt u}\wt v \,\in\, \k $.

\item[(iii)] Let $\Phi^Y(u)\,=\, -\nabla^Y_u(\xi_Y)$. Then $\wt{\Phi^Y(u)}-\Phi(\wt u)\,\in\, \k$.
\end{itemize}
\end{lemma}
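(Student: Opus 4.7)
The plan is to treat $\pi\,:\,\mu_X^{-1}(0)\longrightarrow Y$ as a principal $K$-bundle whose vertical distribution is $\k$ and whose horizontal distribution is the $g$-orthogonal complement of $\k$. Because $K$ acts by isometries, $\pi$ is a Riemannian submersion. A basic fact to be used throughout is that a horizontal lift $\wt u$ is automatically $K$-invariant: both horizontality and the identity $d\pi(\wt u)=u$ are $K$-equivariant conditions.

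For (i), I would observe that $\wt u$ is $\pi$-related to $u$, so $\wt u(\pi^* f)=\pi^*(uf)$ for every $f\in C^\infty(Y)$; applying this twice yields $[\wt u,\wt v](\pi^* f)=\pi^*([u,v]f)=\wt{[u,v]}(\pi^* f)$. Hence $[\wt u,\wt v]-\wt{[u,v]}$ annihilates every pullback function and is therefore tangent to the fibres of $\pi$. The Lie bracket of $K$-invariant fields is $K$-invariant, so the difference is a $K$-invariant vertical vector field on the principal $K$-bundle, which under the standard identification is the fundamental vector field of an element of $\k$.

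For (ii), the plan is to plug horizontal lifts into the Koszul formula for $\nabla$ on $(X,g)$ and test against an arbitrary horizontal lift $\wt w$. Since $\pi$ is a Riemannian submersion one has $g(\wt v,\wt w)=\pi^* g_Y(v,w)$, so the three ``derivative'' terms of Koszul become pullbacks of the corresponding terms for $\nabla^Y_u v$; and by (i) each bracket $[\wt u,\wt v]$ differs from $\wt{[u,v]}$ by an element of $\k$, which is $g$-orthogonal to the horizontal $\wt w$, so the bracket terms also reduce to pullbacks of the analogous Koszul quantities on $Y$. One reads off $g(\nabla_{\wt u}\wt v,\wt w)=g(\wt{\nabla^Y_u v},\wt w)$ for every horizontal $\wt w$, whence the horizontal component of $\nabla_{\wt u}\wt v$ equals $\wt{\nabla^Y_u v}$ and the difference is vertical, i.e.\ lies in $\k$.

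For (iii), applying (ii) with $v=\xi_Y$ and using $\xi=\wt{\xi_Y}$ gives $\wt{\nabla^Y_u\xi_Y}-\nabla_{\wt u}\xi\in\k$; substituting the definitions $\Phi(\wt u)=-\nabla_{\wt u}\xi$ and $\Phi^Y(u)=-\nabla^Y_u\xi_Y$ rewrites this as $\wt{\Phi^Y(u)}-\Phi(\wt u)\in\k$, which is the desired conclusion. The main obstacle in the plan is step (ii): Koszul directly controls only the horizontal part of $\nabla_{\wt u}\wt v$, so to obtain membership in $\k$ rather than in $\k$ plus a normal direction to $\mu_X^{-1}(0)$ in $X$, one must additionally pair the Koszul formula against vertical Killing fields $w\in\k$ and, using the $K$-invariance identities $[w,\wt u]=[w,\wt v]=0$, verify that $\nabla_{\wt u}\wt v$ is in fact tangent to $\mu_X^{-1}(0)$.
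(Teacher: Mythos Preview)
Your approach---part (i) via $\pi$-relatedness, part (ii) via the Koszul formula yielding $g(\nabla_{\wt u}\wt v,\wt w)=\pi^*g_Y(\nabla^Y_u v,w)$, and part (iii) as the special case $v=\xi_Y$---is exactly the paper's, which is in fact terser and leaves the normal-direction issue you raise entirely implicit. One correction to your closing remark: pairing the Koszul formula against $w\in\k$ only recovers the O'Neill vertical component $\tfrac12[\wt u,\wt v]^{\k}$ and says nothing about tangency to $\mu_X^{-1}(0)$; the $g$-normal bundle of $\mu_X^{-1}(0)$ in $X$ is $J\k=\Phi(\k)$, so to rule out a normal component you would have to pair against $\Phi(a)$ for $a\in\k$ rather than against $a$ itself.
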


\begin{proof}
The first identity follows from the definition, the second from (i) and applying the Koszul formula twice showing
$$
g(\nabla_{\wt u}\wt v,\,\wt w) \,=\, \pi^*g_Y(\nabla_u v,\,w)\, .
$$
Now (ii) implies that $\wt{\nabla^Y_w \xi_Y}- \nabla_{\wt w} \xi $ is a vertical vector field.
\end{proof}

Above we defined $Y\,=\,\mu^{-1}_X(0)/K\,=\,(\mu^{-1}(0)\cap(X\times\{1\}))/K$.

\begin{proposition}\label{pr:saq}
The manifold $(Y,g_Y,\xi_Y)$ is Sasakian.
\end{proposition}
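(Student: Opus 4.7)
The plan is to verify condition (iii) of Theorem~\ref{de:sasaki}: that the metric cone over $(Y, g_Y)$ is K\"ahler with Reeb field $\xi_Y$. By Lemma~\ref{le:timesr+} the symplectic reduction $\mu^{-1}(0)/K$ is diffeomorphic to $Y \times \R_+$, and by Theorem~\ref{th:kir} this quotient inherits a K\"ahler structure whose complex structure and Riemannian metric are descended from $J$ and $g_M$ on $M$. It therefore suffices to show (a) that the descended Riemannian metric on $Y \times \R_+$ equals the cone metric $dr^2 \oplus r^2 g_Y$, and (b) that at points of $Y \times \{1\}$ the descended complex structure sends $\pt/\pt r$ to $\xi_Y$.

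For (a) the key is to identify the $g_M$-horizontal lift of a tangent vector on $Y \times \R_+$. Because $[\k, \pt/\pt r] \,=\, 0$, the elements of $\k$ are tangent to every slice $X \times \{r\}$; and because $g_M = dr^2 \oplus r^2 g$, the direction $\pt/\pt r$ is automatically $g_M$-orthogonal to $\k$, while inside $TX$ the $g_M$-orthogonality to $\k$ coincides with the $g$-orthogonality (since $g_M|_{TX} = r^2 g$). Consequently, the horizontal lift of $v + s\, \pt/\pt r \,\in\, T_{(y,r)}(Y \times \R_+)$ to $(x, r) \,\in\, \mu_X^{-1}(0) \times \R_+ = \mu^{-1}(0)$ is $\wt v + s\, \pt/\pt r$, where $\wt v$ is the horizontal lift from Lemma~\ref{le:nabla}. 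A direct computation then gives
\[
g_M(\wt v + s\, \pt/\pt r,\, \wt w + t\, \pt/\pt r) \,=\, r^2 g(\wt v, \wt w) + st \,=\, r^2 g_Y(v, w) + st ,
\]
which is exactly the cone metric.

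For (b), lifting $\pt/\pt r$ at $(y, 1)$ to $(x, 1) \,\in\, \mu^{-1}(0)$ gives $\pt/\pt r$ itself (already horizontal), and applying $J$ yields $\xi(x)$ by \eqref{eq:xi}. Tangency of $\xi$ to $\mu^{-1}(0)$ at $(x, 1)$ follows from the moment map identity \eqref{eq:mm} together with $J\xi = -\pt/\pt r$ and the $g_M$-orthogonality of $\pt/\pt r$ to $\k$; tangency to $X \times \{1\}$ is immediate; and \eqref{eq:xpk} shows that $\xi(x)$ is $g$-horizontal, so it projects to $\xi_Y(y)$ by the definition of $\xi_Y$. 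Hence the descended complex structure applied to $\pt/\pt r$ yields $\xi_Y$, as required. The main obstacle I expect is the careful bookkeeping of orthogonalities in (a) --- the three hypotheses \eqref{eq:xpk}, \eqref{eq:kperpr}, and $[\k, \pt/\pt r] = 0$ are precisely what is needed to make the horizontal lift in the symplectic reduction compatible with the horizontal lift in the Riemannian submersion $\mu_X^{-1}(0) \to Y$, thereby reducing the descended K\"ahler metric to the cone form.
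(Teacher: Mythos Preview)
Your argument is correct, but it takes a genuinely different route from the paper. The paper verifies condition~(i) of Theorem~\ref{de:sasaki} directly on $Y$: using Lemma~\ref{le:nabla} it lifts $(\nabla^Y_u\Phi^Y)(v)$ to $\mu_X^{-1}(0)$, works modulo $\k$, and reduces to the Sasakian identity \eqref{id.} on $X$, obtaining $(\nabla^Y_u\Phi^Y)(v)=g_Y(u,v)\xi_Y-g_Y(\xi_Y,v)u$. You instead verify condition~(iii), exhibiting the K\"ahler cone over $(Y,g_Y)$ as the K\"ahler reduction $\mu^{-1}(0)/K\simeq M/G$ furnished by Theorem~\ref{th:kir}, and checking that under the identification of Lemma~\ref{le:timesr+} the descended metric is the cone metric and the descended complex structure sends $\pt/\pt r$ to $\xi_Y$. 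Your approach is more conceptual and in fact anticipates the content of Theorem~\ref{thm-m}: once Proposition~\ref{pr:saq} is proved your way, the main theorem is essentially a restatement. The paper's approach, by contrast, keeps Proposition~\ref{pr:saq} purely Riemannian and independent of Kirwan's theorem, deferring the identification with $M/G$ to Section~5; this has the minor advantage of not relying on K\"ahler-reduction machinery at this stage, at the cost of the explicit covariant-derivative bookkeeping encoded in Lemma~\ref{le:nabla}.
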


\begin{proof}
We note first that the horizontal lift of the Reeb field $\xi_Y$ is equal to the original Reeb field $\xi$ on $X$. We will verify condition (ii) of Theorem~\ref{de:sasaki} on $Y$, using the tilde notation for horizontal lifts. Let $u$, and $v$ be vector fields on $Y$. We will apply Lemma~\ref{le:nabla} repeatedly, and use $\equiv_\k$ for equivalence modulo elements of $\k$.
\begin{gather*}
(\wt{\nabla^Y_u\Phi^Y)(v)} \,=\,\wt{\nabla^Y_u(\Phi^Y(v))} -
\wt{\Phi^Y(\nabla^Y_u)(v))}
\equiv_\k \nabla_{\wt u}(\wt{\Phi^Y(v)})+ \nabla_{\wt{\nabla^Y_u(v)}}(\xi)\\
\equiv_\k -\nabla_{\wt u}\nabla_{\wt v}(\xi) + \nabla_{\nabla_{\wt u}\wt v}(\xi) = (\nabla_{\wt u}\Phi)(\wt v)= g(\wt u,\wt v)\xi - g(\xi,\wt v)\wt u\\
= \wt{g_Y(u,v) \xi_Y} - \wt{g_Y(\xi_Y,v)u} \hspace{5.9cm}
\end{gather*}
Hence
$$
(\nabla^Y_u\Phi^Y)(v)\,=\,{g_Y(u,v) \xi_Y} - {g_Y(\xi_Y,v)u}\, .
$$
\end{proof}

\section{Symplectic reduction for Sasakian manifolds}
Let $(X,g,\xi)$ be a Sasakian manifold, and $(M,\omega_M)$ the associated K\"ahler manifold. Let $G$ be a semisimple complex Lie group acting holomorphically on $M$ with finite stabilizers, fixing $\omega_M$. We know that in this case a moment map $\mu
\,:\, M\,\longrightarrow\, \k^*$ exists. In the somewhat more general case of a reductive group $G$ we make the existence of a moment map an assumption. By Kirwan's result the Marsden-Weinstein symplectic quotient
$$
\mu^{-1}(0)/K \overset{\sim}{\longrightarrow} M/G
$$
possesses a complex structure turning the symplectic form on the quotient into a K\"ahler form.

Our geometric assumptions are \eqref{eq:xpk} and \eqref{eq:kperpr}.

Proposition~\ref{pr:saq} states that $(Y,\,g_Y,\,\xi_Y)$ is a Sasakian manifold, and by Lemma~\ref{le:timesr+} we have
$$
Y \times \R_+ \,\overset{\sim}{\longrightarrow}\, \mu^{-1}(0)/K \overset{\sim}{\longrightarrow} M/G\, .
$$
So far we assumed free group actions. In case of finite stabilizers $Y$ is a Sasakian orbifold and $Y\times \R_+
\,\simeq\, M/G$ is a K\"ahler orbifold. The differential geometric computation remains the same.

\begin{theorem}\label{thm-m}
The geometric K\"ahler quotient $M/G$ is induced by a natural structure of a Sasakian orbifold on the quotient $\mu_X^{-1}(0)/K=\mu^{-1}(0)\cap (X\times\{1\})/K$.
\end{theorem}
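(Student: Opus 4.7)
The plan is to assemble Theorem~\ref{thm-m} from the three structural results developed above. First, Proposition~\ref{pr:saq} equips $Y := \mu_X^{-1}(0)/K$ with the Sasakian triple $(Y, g_Y, \xi_Y)$. Second, Lemma~\ref{le:timesr+} identifies $\mu^{-1}(0)/K$ with $Y \times \R_+$ as smooth manifolds in a way compatible with the $\R_+$-action. Third, Kirwan's theorem (Theorem~\ref{th:kir}) identifies $\mu^{-1}(0)/K$ with the geometric quotient $M/G$ as a K\"ahler orbifold. What remains is to verify that the K\"ahler structure on $Y \times \R_+$ obtained by viewing it as the metric cone over the Sasakian manifold $Y$ coincides with the K\"ahler structure it inherits from $M/G$ under the composed diffeomorphism.

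Concretely, I would first produce the Sasakian manifold $Y$ via Proposition~\ref{pr:saq}, then apply Lemma~\ref{le:timesr+} to split $\mu^{-1}(0) \simeq \mu_X^{-1}(0) \times \R_+$ compatibly with the $K$-action, and finally compose with the diffeomorphism of Theorem~\ref{th:kir} to obtain
\[
Y \times \R_+ \,\overset{\sim}{\longrightarrow}\, \mu^{-1}(0)/K \,\overset{\sim}{\longrightarrow}\, M/G.
\]
It then suffices to check that the metric cone K\"ahler datum on the left-hand side equals the descended K\"ahler orbifold datum on the right-hand side.

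The main obstacle will be this last compatibility check. It decomposes into three pointwise verifications on horizontal tangent vectors. For the radial direction, the hypothesis $[\k, \pt/\pt r] = 0$ in \eqref{eq:kperpr} guarantees that $\pt/\pt r$ descends to the quotient and matches the radial coordinate on the cone over $Y$. For the metric, the restriction of the cone metric on $M$ to the slice $X \times \{r\}$ is $r^2 g$; since $g_Y$ is obtained by restricting $g$ to $\k$-horizontal vectors and descending, the induced metric on the slice $Y \times \{r\}$ is $r^2 g_Y$, which is exactly the slice of the cone metric on $Y \times \R_+$. For the complex structure, the defining identities $J(\pt/\pt r)|_{X \times \{1\}} = \xi$ and $J|_F = \Phi$ descend by invoking \eqref{eq:xpk} (so that $\xi$ is $\k$-horizontal and descends to $\xi_Y$) together with the computation already carried out in Proposition~\ref{pr:saq} showing that $\Phi$ descends modulo $\k$ to $\Phi^Y$. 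Combining these three items yields the equality of the two K\"ahler structures.

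For the orbifold generalization when $K$ and $G$ act with finite rather than trivial stabilizers, one passes to local uniformizing charts. All the identities used above --- in Proposition~\ref{pr:saq}, Lemma~\ref{le:timesr+}, and the compatibility check --- are pointwise computations in horizontal directions and therefore persist under finite quotients, so the manifold argument transfers directly to the orbifold setting, as already noted in the discussion preceding the statement.
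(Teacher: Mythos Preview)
Your proposal is correct and follows essentially the same route as the paper: the paper's argument (given in the paragraphs immediately preceding the theorem) simply invokes Proposition~\ref{pr:saq}, Lemma~\ref{le:timesr+}, and Kirwan's Theorem~\ref{th:kir} to obtain the chain $Y\times\R_+ \overset{\sim}{\to} \mu^{-1}(0)/K \overset{\sim}{\to} M/G$, and then remarks that the orbifold case follows because the differential-geometric computations are unchanged. Your additional compatibility check---matching the cone K\"ahler structure on $Y\times\R_+$ with the descended K\"ahler structure on $M/G$---is a detail the paper leaves entirely implicit, so if anything your outline is more complete than the original.
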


\section*{Acknowledgements}

After the paper was written, Liviu Ornea kindly brought to our attention \cite{GO}; we thank him for that.
We thank the referee for going through the paper very carefully.
We thank the International Centre for Theoretical Sciences for hospitality while the work was carried out.
The first-named author is partially supported by a J. C. Bose Fellowship.

%%%%%%%%%%%%%%%%%%%%%%%%%%%%%%%%%%%%%%%%%%%%%%%%%%%%%%%%%%%%%%%%%

\end{document}